\title[MY for compact K\"ahler manifolds with semi-positive $K_X$]
{Miyaoka-Yau inequality for compact K\"ahler manifolds with semi-positive canonical bundle} 
\author
{Ryosuke Nomura}
\address{Graduate School of Mathematical Sciences, The University of Tokyo \endgraf
	3-8-1 Komaba, Meguro-ku, Tokyo, 153-8914, Japan.}
\email{nomu@ms.u-tokyo.ac.jp}
\thanks{Classification AMS 2010: 53C55, 
	32W20.
}
\keywords{Miyaoka-Yau inequality, Chern class, K\"ahler-Ricci flow, scalar curvature.
}
\begin{document}
	
\begin{abstract}
	In this paper, we prove the Miyaoka-Yau inequality for compact K\"ahler manifolds with semi-positive canonical bundle.
	The key point of the proof is the estimate for the $L^2$-norm of the scalar curvature along the K\"ahler-Ricci flow.
\end{abstract}
	
\maketitle

\section{Introduction}
Let $X$ be a complex manifold of dimension $n$ and $K_X$ be the canonical bundle of $X$.
The Miyaoka-Yau inequality is the following inequality for Chern classes of $X$ which holds under suitable positivity condition on $K_X$: 
\begin{align}\label{MY}
\tag{MY}
	\left( 
		2(n+1)c_2 (X) - n c_1(X)^2
 \right)
	 \cdot \left( -c_1 (X) \right)^{n-2} 
 \ge 0.
\end{align}
In 1977, Yau \cite{Yau77} showed (\ref{MY}) under the ampleness of $K_X$ and Miyaoka \cite{Miyaoka77} under the bigness of $K_X$ and $n=2$. 
After that, Y. Zhang \cite{Zhang09MiyaokaYau} obtained (\ref{MY}) for smooth minimal projective varieties of general type and Guenancia-Taji \cite{GuenanciaTaji16} for minimal projective varieties.
For more detailed historical account and related results, we refer to \cite[Section 1]{GuenanciaTaji16}.
We remark that all most all results for (\ref{MY}) are proved under the projectivity assumption.
Our motivation here is to extend it to compact K\"ahler manifolds.
The main theorem is stated as follows: 
\begin{theorem}\label{Main}
	All compact K\"ahler manifolds with semi-positive canonical bundle
	satisfy (\ref{MY}).
\end{theorem}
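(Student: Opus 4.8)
The plan is to run the normalized K\"ahler--Ricci flow and to reduce (\ref{MY}) to an asymptotic control of the $L^2$-norm of the scalar curvature along the flow. Since $K_X$ is semi-positive it is nef, so the normalized K\"ahler--Ricci flow $\partial_t\omega_t=-\operatorname{Ric}(\omega_t)-\omega_t$ with an arbitrary K\"ahler form $\omega_0$ as initial datum has a solution on $[0,\infty)$, and $[\omega_t]=e^{-t}[\omega_0]+(1-e^{-t})(-2\pi c_1(X))\to-2\pi c_1(X)$. Wedging the Chern--Weil form of $2(n+1)c_2(X)-nc_1(X)^2$ attached to $\omega_t$ with $\omega_t^{n-2}$ and integrating yields $\bigl(2(n+1)c_2(X)-nc_1(X)^2\bigr)\cdot[\omega_t]^{n-2}$, which converges to $(2\pi)^{n-2}$ times the left-hand side of (\ref{MY}); thus Theorem~\ref{Main} follows once we show $\liminf_{t\to\infty}\int_X\bigl(2(n+1)c_2(\omega_t)-nc_1(\omega_t)^2\bigr)\wedge\omega_t^{n-2}\ge0$. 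Here I would invoke the classical pointwise identity coming from the $U(n)$-irreducible decomposition of the curvature tensor into its scalar, trace-free Ricci, and Bochner parts:
\[
	\bigl(2(n+1)c_2(\omega)-nc_1(\omega)^2\bigr)\wedge\frac{\omega^{n-2}}{(n-2)!}=\Bigl(a_n\,\lvert B(\omega)\rvert^2-b_n\,\lvert\mathring{\operatorname{Ric}}(\omega)\rvert^2\Bigr)\frac{\omega^n}{n!}
\]
for every K\"ahler metric $\omega$, with universal constants $a_n,b_n>0$ (the scalar part drops out because this combination annihilates metrics of constant holomorphic sectional curvature, and the negative sign of the Ricci term is fixed by testing on, say, $\mathbf{P}^1\times C$ with $g(C)\ge2$; $B$ is the Bochner tensor, $\mathring{\operatorname{Ric}}=\operatorname{Ric}-\tfrac{s}{n}\omega$). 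As $a_n\lvert B(\omega_t)\rvert^2\ge0$, the problem reduces to showing $\int_X\lvert\mathring{\operatorname{Ric}}(\omega_t)\rvert^2\omega_t^n\to0$.

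Using $\lvert\mathring{\operatorname{Ric}}\rvert^2=\lvert\operatorname{Ric}\rvert^2-\tfrac1n s^2$ and the cohomological identities $\int_X\bigl(s_t^2-\lvert\operatorname{Ric}(\omega_t)\rvert^2\bigr)\tfrac{\omega_t^n}{n!}=\bigl(2\pi c_1(X)\bigr)^2\cdot\tfrac{[\omega_t]^{n-2}}{(n-2)!}$ and $\int_X s_t\tfrac{\omega_t^n}{n!}=2\pi c_1(X)\cdot\tfrac{[\omega_t]^{n-1}}{(n-1)!}$ (here $s_t$ is the scalar curvature of $\omega_t$), one rewrites
\[
	\int_X\lvert\mathring{\operatorname{Ric}}(\omega_t)\rvert^2\frac{\omega_t^n}{n!}=\frac{n-1}{n}\int_X s_t^2\,\frac{\omega_t^n}{n!}-\bigl(2\pi c_1(X)\bigr)^2\cdot\frac{[\omega_t]^{n-2}}{(n-2)!},
\]
and (checking the constant on a K\"ahler--Einstein manifold) the vanishing $\int_X\lvert\mathring{\operatorname{Ric}}(\omega_t)\rvert^2\omega_t^n\to0$ becomes equivalent to the asymptotic $L^2$-estimate $\int_X s_t^2\tfrac{\omega_t^n}{n!}\to n^2\lim_{t\to\infty}\int_X\tfrac{\omega_t^n}{n!}$. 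One inequality is easy: from $\int_X s_t\tfrac{\omega_t^n}{n!}\to-n\lim_t\int_X\tfrac{\omega_t^n}{n!}$ and Cauchy--Schwarz, $\liminf_{t\to\infty}\int_X s_t^2\tfrac{\omega_t^n}{n!}\ge n^2\lim_{t\to\infty}\int_X\tfrac{\omega_t^n}{n!}$.

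The matching upper bound $\limsup_{t\to\infty}\int_X s_t^2\tfrac{\omega_t^n}{n!}\le n^2\lim_{t\to\infty}\int_X\tfrac{\omega_t^n}{n!}$ is the heart of the proof and the step I expect to be the main obstacle. I would base it on the evolution equation $\partial_t s=\Delta_{\omega_t}s+\lvert\operatorname{Ric}(\omega_t)\rvert^2+s$: the maximum principle applied to $\partial_t s\ge\Delta_{\omega_t}s+\tfrac1n s(s+n)$ gives a sharp lower bound $s_t\ge\sigma(t)$ with $\sigma(t)\to-n$ (after choosing $[\omega_0]$ so that the mean scalar curvature is negative, which is possible unless $c_1(X)=0$, a case in which (\ref{MY}) is elementary), and this, combined with an integral estimate for $\tfrac{d}{dt}\int_X s_t^2\,\omega_t^n$ and a uniform upper bound for $s_t$ furnished by the semi-positivity of $K_X$ and the smoothing properties of the flow, pins down the $L^2$-growth; in the collapsing regime where $K_X$ is not big (so $\lim_t\int_X\tfrac{\omega_t^n}{n!}=0$; and if moreover the numerical dimension of $K_X$ is less than $n-2$ then $(-c_1(X))^{n-2}=0$ and (\ref{MY}) is already trivial) one additionally uses the fibration structure governing the long-time behaviour of the flow. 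Once the $L^2$-estimate is established, $\int_X\lvert\mathring{\operatorname{Ric}}(\omega_t)\rvert^2\omega_t^n\to0$, whence $\int_X\bigl(2(n+1)c_2(\omega_t)-nc_1(\omega_t)^2\bigr)\wedge\omega_t^{n-2}\to\lim_{t\to\infty}a_n\int_X\lvert B(\omega_t)\rvert^2\tfrac{\omega_t^n}{n!}\ge0$, which is (\ref{MY}).
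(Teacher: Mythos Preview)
Your reduction via the Chern--Weil decomposition into Bochner and trace-free Ricci parts is essentially the paper's Proposition~\ref{Formula}; the paper packages it slightly differently, bounding $\lvert\mathring{\operatorname{Ric}}(\omega_t)\rvert^2\le\lvert\operatorname{Ric}(\omega_t)+\omega_t\rvert^2$ and then reducing (\ref{Ricci}) to a time-integrated $L^2$-bound on the scalar curvature through the evolution equation for $R(\omega_t)$ (Proposition~\ref{ReductionScalar}). Up to this point the two arguments run in parallel.

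The genuine gap is in your proposed control of $\int_X s_t^2\,\omega_t^n$. You invoke ``a uniform upper bound for $s_t$ furnished by the semi-positivity of $K_X$'' and ``the fibration structure governing the long-time behaviour of the flow''. Neither is available under the stated hypotheses: a uniform scalar-curvature bound along the flow is known when $K_X$ is semi-ample (Song--Tian) or nef and big (Z.~Zhang), but semi-positivity is \emph{a priori} strictly weaker than semi-ampleness, and without semi-ampleness there is no Iitaka-type fibration to lean on. The paper explicitly notes that the Song--Tian bound cannot be invoked here; getting around the absence of a pointwise upper bound on $s_t$ is exactly the new content.

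The paper's replacement is the Dirichlet-type energy $\mathcal{E}_t=\int_X i\,\partial f_t\wedge\bar\partial f_t\wedge\omega_t^{n-1}$ with $f_t=\log(\omega_t^n/\Omega)$, where $\Omega$ is a volume form with $-\operatorname{Ric}(\Omega)\ge0$ (this is precisely where semi-positivity is used). A direct computation gives
\[
\frac{d}{dt}\mathcal{E}_t\le-\frac{2}{n}\int_X R(\omega_t)^2\,\omega_t^n+C'e^{-(n-\nu)t},
\]
the exponentially decaying error coming from the assumption that $K_X$ is not big (so the numerical dimension $\nu<n$); the nef-and-big case is handled separately by Zhang. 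Integrating in $t$ and using $\mathcal{E}_t\ge0$ yields $\int_0^\infty\int_X R(\omega_t)^2\,\omega_t^n\,dt<\infty$, hence $\int_0^\infty\int_X\lvert\operatorname{Ric}(\omega_t)+\omega_t\rvert^2\,\omega_t^n\,dt<\infty$, and one extracts a \emph{subsequence} $t_i\to\infty$ along which the trace-free Ricci contribution vanishes. Only the universal lower bound $R(\omega_t)\ge -C$ from the maximum principle is used; no upper bound on $s_t$ and no fibration enter. This monotonicity argument for $\mathcal{E}_t$ is the missing ingredient in your outline.
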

\noindent
Here, semi-positive means that there exists a smooth Hermitian metric on $K_X$ whose Chern curvature is semi-positive.
It is natural to expect that (\ref{MY}) holds even when compact K\"ahler manifolds with nef canonical bundle.
However, in our argument, we only prove when $K_X$ is semi-positive.

Before we outline the proof of Theorem \ref{Main}, we fix some notations.
Let $X$ be a compact \kahler manifold of dimension $n$. 
For a \kahler form $\omega$ on $X$, we denote $\Rm(\omega )$ the Riemann curvature tensor of $\omega$, $\ric(\omega)\in 2\pi c_1(X)$ the Ricci curvature of $\omega$, and $R(\omega)$ the scalar curvature of $\omega$.
The \krf \ starting from a \kahler form $\omz$ is the smooth family $\{\omt\}_{t\ge 0} $ of \kahler forms satisfying
\begin{align*}
\begin{cases}
	\, \dt \omt 
		&=-\rict - \omt, \\[5pt]
	\, \omt|_{t=0}
		&=\omz ,
\end{cases}
\end{align*}
which we simply write $\omt$.

For the proof of Theorem \ref{Main}, we first note that since the case when $K_X$ is nef and big (this condition is equivalent to $K_X$ is semi-positive and big) is shown by \cite{Zhang09MiyaokaYau}, we only need to show (\ref{MY}) if $K_X$ is semi-positive and not big.
We follow the argument in \cite{Zhang09MiyaokaYau}.
The essential point of his proof is to reduce (\ref{MY}) to the uniform boundedness of the scalar curvature along the K\"ahler-Ricci flow 
which is shown by \cite{Zhang09ScalKRFMinGenType} when $K_X$ is nef and big. 
In our setting, we need the following new scalar curvature estimate.
\begin{theorem}[$=$Theorem \ref{Scalar}]
Let $\omz$ be a \kahler form satisfying $[\omz] - \ckx >0$ and $\omt$ be the \krf \ starting from $\omz$.
Assume that $K_X$ is semi-positive and not big.
Then the following estimate holds:
	\begin{align*}
		\int_0^\infty dt \intx \rt^2 \omtn
			&< \infty .
	\end{align*}
\end{theorem}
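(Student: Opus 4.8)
The plan is to exploit the structure of the Kähler-Ricci flow when $K_X$ is semi-positive and not big, where the flow exists for all time and the cohomology class $[\omt]$ converges to the (degenerate but semi-positive) class $\ckx$. Writing $[\omt] = -t\,2\pi c_1(X) + \cdots$ more precisely as $[\omt] = e^{-t}[\omz] + (1-e^{-t})\ckx$, the volume $V(t) := \intx \omtn$ behaves like a polynomial in $e^{-t}$ whose leading term is governed by the intersection numbers $\ckx^k \cdot [\omz]^{n-k}$; since $K_X$ is not big, $\ckx^n = 0$, so $V(t) \to 0$ at a definite exponential rate determined by the numerical dimension of $K_X$. The first step is therefore to record the exact asymptotics of $V(t)$, together with the identity $\frac{d}{dt}V(t) = \intx(-\rt - n)\omtn = -\intx \rt\,\omtn - n V(t)$, which comes from differentiating $\omtn$ along the flow and using $\tr_{\omt}\rict = \rt$.

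Next I would set up the basic monotonicity/energy identity for $\intx \rt\,\omtn$ and its higher analogues. Along the Kähler-Ricci flow one has the evolution equation $(\dt - \Delta_{\omt})\rt = \rt + |\rict|^2_{\omt} \ge \rt + \tfrac1n \rt^2$, and integrating against $\omtn$ (and handling the $\dt(\omtn)$ term, which contributes $-\intx \rt(\rt+n)\omtn$) yields a differential inequality of the form $\frac{d}{dt}\!\left(\intx \rt\,\omtn\right) \ge \tfrac1n \intx \rt^2\,\omtn - \intx \rt^2\,\omtn + (\text{lower order})$; the borderline cancellation of the $\rt^2$ terms is exactly the well-known subtlety, so instead I expect to follow Zhang's approach and work with the \emph{normalized} or weighted scalar curvature, e.g. consider $\intx \rt\,\omtn$ against the known lower bound $\rt \ge -n + c\,e^{-t}$ (which follows from the flow equation and semi-positivity of $K_X$ forcing $\rict + \omt$ to stay controlled), and with the \emph{upper} bound on $\intx \rt\,\omtn$ obtained by pairing $\rict \in 2\pi c_1(X)$ with $\omt^{n-1}$ cohomologically: $\intx \rt\,\omtn = n\,\frac{2\pi c_1(X)\cdot[\omt]^{n-1}}{1} = -n\,\frac{\ckx \cdot [\omt]^{n-1}}{1}$, a purely cohomological quantity that one can compute and bound in terms of $V(t)$ and $e^{-t}$.

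Assembling these, the key inequality should read: after multiplying the flow equation for $\rt$ by an appropriate positive weight (a power of $e^{t}$ or of $V(t)^{-1}$ chosen to absorb the $-nV(t)$ and the borderline $\rt^2$ terms) and integrating in both space and the time interval $[0,T]$, the term $\int_0^T dt \intx \rt^2\,\omtn$ appears with a favorable sign and is dominated by boundary terms at $t=0$ and $t=T$ plus cohomological integrals, all of which are uniformly bounded as $T\to\infty$ precisely because $K_X$ is not big (so the top intersection number vanishes and the "dangerous" growth is killed). Taking $T\to\infty$ then gives the claimed finiteness $\int_0^\infty dt \intx \rt^2\,\omtn < \infty$.

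The main obstacle I anticipate is the borderline cancellation between the Cauchy-Schwarz lower bound $|\rict|^2 \ge \tfrac1n \rt^2$ and the $-\rt^2$ coming from the volume derivative: naively these cancel and give no gain, so the whole argument hinges on extracting the correct \emph{strictly positive} residual, which is where the not-big hypothesis (via the decay rate of $V(t)$ and of the cohomological pairing $\ckx\cdot[\omt]^{n-1}$) must be fed in quantitatively. A secondary technical point is justifying all integrations by parts and the differentiation under the integral sign on the noncompact time interval, and controlling the metric $\omt$ uniformly enough (e.g. via the semi-positivity of $K_X$ giving a lower bound $\omt \ge c\,e^{-t}\theta$ for a fixed semi-positive $\theta \in \ckx$) to make the boundary terms at $t=T$ genuinely go to zero or stay bounded.
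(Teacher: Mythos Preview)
Your diagnosis of the obstacle is accurate, but your proposed cure does not work, and this is exactly the gap. If you integrate the scalar-curvature evolution $(\partial_t-\Delta_{\omega_t})R(\omega_t)=R(\omega_t)+|\mathrm{Ric}(\omega_t)|^2_{\omega_t}$ against $\omega_t^n$ you obtain
\[
\frac{d}{dt}\int_X R(\omega_t)\,\omega_t^n \;=\; \int_X |\mathrm{Ric}(\omega_t)|^2_{\omega_t}\,\omega_t^n \;-\; \int_X R(\omega_t)^2\,\omega_t^n \;-\;(n-1)\int_X R(\omega_t)\,\omega_t^n,
\]
and the only a priori relation between the first two terms on the right is the \emph{lower} bound $|\mathrm{Ric}|^2\ge \tfrac1n R^2$. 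Hence this identity, and any version of it obtained by multiplying by a time- or space-dependent positive weight, only yields \emph{lower} bounds on $\int_X R(\omega_t)^2\,\omega_t^n$, never the upper bound you need. The ``appropriate weight'' you allude to cannot flip the sign of the $|\mathrm{Ric}|^2$ term, and the not-big hypothesis enters only through the cohomological quantity $\int_X R(\omega_t)\,\omega_t^n=-n\bigl(2\pi c_1(K_X)\cdot[\omega_t]^{n-1}\bigr)$, which decays but does nothing to tame $|\mathrm{Ric}|^2$. (This computation is in fact Proposition~\ref{ReductionScalar} of the paper, used in the opposite direction: it reduces the Ricci $L^2$-bound to the scalar $L^2$-bound, not vice versa.)

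The paper's key idea, which your proposal is missing, is to abandon the evolution of $R$ entirely and instead differentiate the Dirichlet energy
\[
\mathcal E(t)=\int_X \sqrt{-1}\,\partial f_t\wedge\bar\partial f_t\wedge\omega_t^{n-1},\qquad f_t=\log\frac{\omega_t^n}{\Omega},
\]
where $\Omega$ is a volume form with $-\mathrm{Ric}(\Omega)\ge 0$ (this is precisely where semi-positivity of $K_X$ is used). Since $\partial_t f_t=-(R(\omega_t)+n)$ and $\sqrt{-1}\,\partial\bar\partial f_t=-\mathrm{Ric}(\omega_t)+\mathrm{Ric}(\Omega)$, the time derivative $\tfrac{d}{dt}\mathcal E(t)$ produces the term $-\tfrac{2}{n}\int_X R(\omega_t)^2\,\omega_t^n$ with the \emph{correct} sign, and the two remaining terms are controlled respectively by (i) the maximum-principle lower bound $R(\omega_t)\ge -C$ together with $-\mathrm{Ric}(\Omega)\ge 0$ and the exponential decay of $2\pi c_1(K_X)\cdot[\omega_t]^{n-1}$ (here ``not big'' is used), and (ii) the hypothesis $[\omega_0]-2\pi c_1(K_X)>0$, which forces the cohomology class of $-\mathrm{Ric}(\omega_t)-\omega_t$ to be negative. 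Integrating $\tfrac{d}{dt}\mathcal E(t)$ over $[0,\infty)$ and using $\mathcal E(t)\ge 0$ gives the finiteness. The essential point is that differentiating $\mathcal E(t)$ never produces a full $|\mathrm{Ric}|^2$ term, which is what makes the argument close.
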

For the proof, we consider the function $\et$ defined by  
\begin{align*}
	\et 
		\deq \intx \ii \partial \ft \wedge \dbar \ft \wedge \omt^{n-1},
\end{align*}
where $f_t \deq \log (\omtn/\Ome)$ and $\Omega$ is a volume form on $X$ such that $-\ricOm \ge 0$.
The function $\et$ is the Dirichlet norm of $f_t$ and is also similar to the 1st derivative of the Mabuchi's energy.
The key observation is that the time derivative of $\et$ can be used to estimate the $L^2$-norm of the scalar curvature of $\omt$.

We remark that Song-Tian \cite{SongTian11ScalKRF} showed that if $K_X$ is semi-ample, then the scalar curvature is uniformly bounded along the \krf. 
However, we cannot apply this result to our setting. \\

\noindent\textbf{Acknowledgment} The author would like to thank his supervisor Prof. Shigeharu Takayama for various comments.
This work is supported by the Program for Leading Graduate Schools, MEXT, Japan.

\section{Proof of the Theorems}
We first recall the argument of \cite{Zhang09MiyaokaYau}. 
The following formula for Chern classes is well-known. 
\begin{proposition}[see {\cite[Chapter 4]{KobayashiBook}}]\label{Formula}
For any \kahler form $\omega $, the following estimate holds.  
	\begin{align*}
		&\left( 
		2(n+1)c_2 (X) - n c_1(X)^2
		 \right)
		 \cdot [\omega ]^{n-2} \\
		&=\dfrac{1}{4\pi^2 n(n-1) }
					\int_X \left(
							(n+1) \absoms{\Rmc(\omega )}^2 
							- (n+2) \absoms{\ricc(\omega )} ^2
					\right)	\omega^{n}\\
		&\ge	 \dfrac{1}{4\pi^2 n(n-1) }
							\int_X \left(
									(n+1) \absoms{\Rmc(\omega )}^2 
									- (n+2)  \absom{\ric (\omega)+\omega }^2
							\right)	\omega^{n}.
	\end{align*}
Here we set 
	\begin{align*}
		\omega	
			&=\ii \gijb \dzidzjb, \\
		\Rmc (\omega)_{i\jbar k \lbar}
			&\deq \Rm(\omega )_{i \jbar k \lbar} 
				- \dfrac{R(\omega)}{n(n+1)} (\gijb \gklb + g_{i \lbar} g_{k\jbar} ) ,\\
		\ricc(\omega)
			&\deq \ric(\omega) - \dfrac{R(\omega)}{n}\omega .
	\end{align*}
\end{proposition}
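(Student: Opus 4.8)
The equality is a classical Chern--Weil computation, and the plan is simply to reproduce it. Equip $T^{1,0}X$ with the Chern connection of $\omega$ and represent $c_1(X)$ by $\tfrac{1}{2\pi}\ric(\omega)$ and $c_2(X)$ by the associated degree-four Chern form, a real $(2,2)$-form polynomial in the curvature tensor $\Rm(\omega)$. For a real $(2,2)$-form $\Phi$, the wedge $\Phi\wedge\omega^{n-2}$ is a top form whose density relative to $\omega^n$ is a universal constant times a fixed double $\omega$-trace of the components of $\Phi$ (concretely, the $(g^{i\bar j}g^{k\bar l}-g^{i\bar l}g^{k\bar j})$-contraction); applying this to $\Phi=2(n+1)\,c_2(X)-n\,c_1(X)^2$ exhibits the left-hand side of the Proposition as the integral of a universal quadratic expression in $\Rm(\omega)$, $\ric(\omega)$ and $R(\omega)$. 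The coefficients $2(n+1)$ and $n$ are precisely those for which, after invoking the K\"ahler (first Bianchi) symmetries of $\Rm(\omega)$ and substituting the traceless/primitive parts $\Rmc(\omega)$ and $\ricc(\omega)$ defined above --- which are exactly $\Rm(\omega)$ and $\ric(\omega)$ with their ``pure scalar'' parts removed --- all $R(\omega)^2$-contributions cancel and the integrand collapses to $\tfrac{1}{4\pi^2 n(n-1)}\bigl((n+1)\absoms{\Rmc(\omega)}^2-(n+2)\absoms{\ricc(\omega)}^2\bigr)$. This bookkeeping, with all normalizations pinned down, is carried out in \cite[Chapter~4]{KobayashiBook}.

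For the inequality it suffices, since the coefficient $-(n+2)$ of the $\absoms{\ricc(\omega)}^2$-term is negative, to prove the pointwise bound $\absoms{\ricc(\omega)}^2\le\absom{\ric(\omega)+\omega}^2$. Using that the $\omega$-trace of $\ric(\omega)$ is $R(\omega)$ and that $\absom{\omega}^2=n$, one computes $\absoms{\ricc(\omega)}^2=\absom{\ric(\omega)}^2-\tfrac{R(\omega)^2}{n}$ and $\absom{\ric(\omega)+\omega}^2=\absom{\ric(\omega)}^2+2R(\omega)+n$, so the difference of the two sides is
\[
\frac{R(\omega)^2}{n}+2R(\omega)+n=\frac{1}{n}\bigl(R(\omega)+n\bigr)^2\ \ge\ 0 .
\]
Multiplying the resulting pointwise inequality $-(n+2)\absoms{\ricc(\omega)}^2\ge-(n+2)\absom{\ric(\omega)+\omega}^2$ by $\tfrac{1}{4\pi^2 n(n-1)}$ and integrating over $X$ against $\omega^n$ gives the asserted estimate.

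Essentially all the content lies in the equality, and there the only real care needed is tracking the normalizations and signs of the Chern forms and of the various curvature contractions so that the coefficients come out exactly as $(n+1)$ and $(n+2)$; this is why the cleanest course is to quote the computation from Kobayashi's book rather than redo it. The inequality is the elementary estimate $(R(\omega)+n)^2\ge 0$ in disguise and presents no difficulty.
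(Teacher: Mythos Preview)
Your proposal is correct and aligns with the paper's treatment: the paper gives no proof of this proposition at all, simply citing \cite[Chapter~4]{KobayashiBook} for the equality, and you do the same. Your added pointwise argument for the inequality, reducing it to $\tfrac{1}{n}(R(\omega)+n)^2\ge 0$, is correct and is presumably what the author had in mind but did not spell out.
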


Thanks to this proposition, in order to prove (\ref{MY}), we only need to find \kahler forms $\{ \omega_i  \}_{i=1}^\infty$ satisfying  
\begin{align}
\label{omi} &\lim_{i \to \infty} [\omega_i] = -2\pi c_1(X)=2\pi c_1(K_X),\\
\label{Ricci}	&\lim_{i \to \infty} \intx \absomi{\ric (\omega_i )+\omega_i }^2 \omega_i^n
	= 0.
\end{align}
In the following argument, we will prove that the \krf \ $\omt$ satisfies these two conditions. 

We first recall the long time existence theorem for the \krf.
\begin{theorem}[{\cite{TianZhang06KRFProjGenType}}]\label{Existence}
For any \kahler form $\omz$, the \krf \ $\omt$ starting from $\omz$ exists for $t \in [0,\infty)$ if and only if the canonical bundle $K_X$ is nef.
Furthermore, in this setting, the cohomology class $\alpha_t $ of $\omt$ satisfies 
\begin{align*}
	\alpha_t = e^{-t} [\omz] + (1-e^{-t}) 2\pi c_1(K_X) \to 2\pi c_1(K_X) \text{ as }t \to \infty.
\end{align*}
In particular, the \krf \  $\omt$ satisfies (\ref{omi}) if $K_X$ is nef.
\end{theorem}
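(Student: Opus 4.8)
The plan is to treat the two assertions separately: the evolution of the cohomology class $\alpha_t = [\omt]$, which is purely cohomological, and the long-time existence, which is the analytic core and is the only place where the nef hypothesis enters. First I would derive the class formula. Taking de Rham classes in the flow equation $\dt\omt = -\rict - \omt$ and using that $[\rict] = 2\pi c_1(X) = -2\pi c_1(K_X)$ for every $t$, the class $\alpha_t$ satisfies the linear ODE $\dt\alpha_t = 2\pi c_1(K_X) - \alpha_t$ in $H^{1,1}(X)$ with $\alpha_0 = [\omz]$. Solving this inhomogeneous first-order ODE with constant inhomogeneity gives $\alpha_t = e^{-t}[\omz] + (1 - e^{-t})\,2\pi c_1(K_X)$, and $e^{-t}\to 0$ yields $\alpha_t \to 2\pi c_1(K_X) = -2\pi c_1(X)$, which is exactly (\ref{omi}).

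Next I would reduce the flow to a scalar parabolic equation. Fix a smooth volume form $\Omega$ and set $\chi \deq -\ricOm \in 2\pi c_1(K_X)$, so that the reference forms $\hat\omega_t \deq e^{-t}\omz + (1-e^{-t})\chi$ are closed, represent $\alpha_t$, and satisfy $\dt\hat\omega_t = \chi - \hat\omega_t$. Writing $\omt = \hat\omega_t + \ii\partial\dbar\varphi_t$ via the $\partial\dbar$-lemma and using $\rict = -\chi - \ii\partial\dbar\log(\omtn/\Omega)$, a direct computation turns the flow into the parabolic complex Monge-Amp\`ere equation
\[
	\dt\varphi_t = \log\frac{(\hat\omega_t + \ii\partial\dbar\varphi_t)^n}{\Omega} - \varphi_t, \qquad \varphi_t|_{t=0} = 0,
\]
and $\omt$ exists and is smooth exactly as long as this equation admits a smooth solution with $\hat\omega_t + \ii\partial\dbar\varphi_t > 0$.

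On any interval $[0,T]$ on which $\alpha_t$ stays in the K\"ahler cone I would fix a positive representative and establish the standard a priori estimates: a $C^0$ bound on $\varphi_t$ and on $\dt\varphi_t$ by the maximum principle (the $-\varphi_t$ term supplying the correct sign), then the crucial second-order bound on $\operatorname{tr}_{\hat\omega_t}\omt$ via a parabolic Aubin-Yau / Schwarz-lemma maximum-principle computation, which together with the $\dt\varphi_t$ bound yields uniform metric equivalence $C^{-1}\hat\omega_t \le \omt \le C\hat\omega_t$; higher-order Evans-Krylov/Calabi and Schauder estimates then give uniform $C^\infty$ control. A continuity (openness-closedness) argument would then identify the maximal existence time as $T_{\max} = \sup\{t \ge 0 : \alpha_t \text{ is a K\"ahler class}\}$.

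Finally I would close the equivalence using the class formula. If $K_X$ is nef then $2\pi c_1(K_X)$ lies in the closed K\"ahler cone while $[\omz]$ lies in its interior; since a segment from an interior point of a convex set to a point of its closure stays in the interior away from the endpoint, $\alpha_t = e^{-t}[\omz] + (1-e^{-t})\,2\pi c_1(K_X)$ is K\"ahler for every finite $t$, so $T_{\max} = \infty$. Conversely, if $\omt$ exists for all $t$ then every $\alpha_t$ is K\"ahler and $\alpha_t \to 2\pi c_1(K_X)$, whence $2\pi c_1(K_X)$ is a limit of K\"ahler classes and therefore nef. I expect the main obstacle to be the second-order (trace/Laplacian) estimate for the Monge-Amp\`ere flow, where positivity and uniform curvature bounds on the reference metric are essential; the delicate point is that the estimate constants degenerate as $\alpha_t$ approaches the boundary of the K\"ahler cone, but since existence is only needed on each compact subinterval where $\alpha_t$ stays strictly K\"ahler, the bounds remain uniform there, which suffices.
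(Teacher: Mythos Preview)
The paper does not prove this theorem at all: it is quoted as a known result of Tian--Zhang \cite{TianZhang06KRFProjGenType} and used as a black box, so there is nothing to compare your argument to within the paper itself. Your outline is the standard proof of that result and is correct in substance: the ODE in cohomology gives the class formula, the $\partial\dbar$-lemma reduces the flow to a parabolic complex Monge--Amp\`ere equation, the maximum-principle and Aubin--Yau/Evans--Krylov estimates give uniform $C^\infty$ bounds on any interval where $\alpha_t$ remains K\"ahler, and the convexity argument then identifies $T_{\max}$ with the first time $\alpha_t$ hits the boundary of the K\"ahler cone, which is $+\infty$ precisely when $K_X$ is nef.
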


We now focus on (\ref{Ricci}). 
In general, it is a hard problem to estimate the Ricci curvature along the \krf. 
However, we can reduce (\ref{Ricci}) to the estimate for the scalar curvature.
More precisely, we have the following proposition:
\begin{proposition}\label{ReductionScalar}
If the canonical bundle $K_X$ is nef, then there exists a constant $C>0$ such that for any $T>0$, we have the following estimate:
	\begin{align*}
	\int_0^T dt \int_X \absomt{\ric(\omt) + \omt}^2 \omt^n
		&\le \int_0^T dt\int_XR(\omt )^2 \omt^n +C,
	\end{align*}
Here $C>0$ is a constant which depends only  on the cohomology classes $[\omz]$ and $c_1(K_X)$.
\end{proposition}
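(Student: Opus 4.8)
The plan is to rewrite the left-hand side as the right-hand side plus a \emph{purely cohomological} error term $D(t)$, and then to see that the nef hypothesis on $K_X$ forces $\int_0^T D(t)\,dt$ to stay bounded uniformly in $T$. I would begin with a pointwise identity, valid for every K\"ahler form $\omega$: diagonalising $\ric(\omega)$ with respect to $\omega$ gives at once
\begin{align*}
	\absom{\ric(\omega)+\omega}^2 = \absom{\ric(\omega)}^2 + 2R(\omega) + n .
\end{align*}

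Next I would convert the integral of $\absom{\ric(\omega)}^2$ into topology. From the elementary pointwise identity $\ric(\omega)\wedge\ric(\omega)\wedge\omega^{n-2} = \tfrac{1}{n(n-1)}\bigl(R(\omega)^2 - \absom{\ric(\omega)}^2\bigr)\omega^n$ and the fact that $\ric(\omega) \in 2\pi c_1(X)$, integration yields
\begin{align*}
	\intx \absom{\ric(\omega)}^2\,\omega^n = \intx R(\omega)^2\,\omega^n - n(n-1)(2\pi)^2 c_1(X)^2\cdot[\omega]^{n-2} .
\end{align*}
Combining this with the standard relations $\intx R(\omega)\,\omega^n = 2\pi n\, c_1(X)\cdot[\omega]^{n-1}$ and $\intx \omega^n = [\omega]^n$, and specialising to $\omega = \omt$, I get that
\begin{align*}
	D(t) &:= \intx \absomt{\ric(\omt)+\omt}^2\,\omtn - \intx R(\omt)^2\,\omtn \\
	&= n[\omt]^n + 4\pi n\, c_1(X)\cdot[\omt]^{n-1} - n(n-1)(2\pi)^2 c_1(X)^2\cdot[\omt]^{n-2}
\end{align*}
depends only on the cohomology class $[\omt]$ and on $c_1(X)$.

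Now I would invoke Theorem \ref{Existence}: since $K_X$ is nef the flow exists for all $t \ge 0$ and $[\omt] = e^{-t}[\omz] + (1-e^{-t})\,2\pi c_1(K_X)$. Substituting this into the displayed formula for $D(t)$ turns $D(t)$ into a polynomial in $e^{-t}$ of degree at most $n$, say $D(t) = \sum_{k=0}^{n} c_k e^{-kt}$, whose coefficients $c_k$ are fixed combinations of intersection numbers of $[\omz]$ and $c_1(K_X)$; in particular they depend only on those two classes. Evaluating at $e^{-t} = 0$ (equivalently, letting $[\omt] \to 2\pi c_1(K_X)$) identifies the constant term as $c_0 = -n^2(2\pi)^n c_1(K_X)^n$.

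The decisive step is the sign of $c_0$. Because $K_X$ is nef we have $c_1(K_X)^n \ge 0$, hence $c_0 \le 0$, and therefore, using $e^{-kt}\le e^{-t}$ for $k\ge 1$ and $t\ge 0$,
\begin{align*}
	D(t) = c_0 + \sum_{k=1}^{n} c_k e^{-kt} \le \sum_{k=1}^{n} c_k e^{-kt} \le \Bigl( \sum_{k=1}^{n} |c_k| \Bigr) e^{-t} .
\end{align*}
Integrating over $[0,T]$ gives $\int_0^T D(t)\,dt \le \sum_{k=1}^{n}|c_k|$ for every $T > 0$, and setting $C := \sum_{k=1}^{n}|c_k|$, a constant depending only on $[\omz]$ and $c_1(K_X)$, yields the assertion. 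Everything here is bookkeeping with elementary curvature identities except precisely this last point: were $c_0$ positive, the term $c_0 T$ would survive and no $T$-independent constant could exist, so the entire weight of the statement rests on the inequality $c_1(K_X)^n \ge 0$ supplied by the nef hypothesis.
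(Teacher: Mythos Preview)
Your argument is correct, and it takes a genuinely different route from the paper's. The paper exploits the parabolic structure of the flow: it starts from the evolution equation $\partial_t R(\omt)=\Delta_{\omt}R(\omt)+|\ric(\omt)+\omt|^2-(R(\omt)+n)$ together with $\partial_t\omtn=-(R(\omt)+n)\omtn$, integrates over $X$ to kill the Laplacian, and after rewriting the result in terms of $\alpha_t$ drops the nonpositive term $-n^2\bigl(2\pi c_1(K_X)\cdot\alpha_t^{\,n-1}\bigr)$ (nonpositive because $K_X$ is nef and $\alpha_t$ is K\"ahler); integrating in $t$ then leaves only boundary values of cohomological quantities. By contrast, you never touch the evolution equation for $R$: you use only the static pointwise identity for $|\ric+\omega|^2$ and the Chern--Weil formula for $\int_X|\ric|^2\omega^n$, so the flow enters solely through the explicit expression $\alpha_t=e^{-t}[\omz]+(1-e^{-t})2\pi c_1(K_X)$. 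Your nef input is the single inequality $c_1(K_X)^n\ge 0$ (the constant term $c_0$), whereas the paper uses the family of inequalities $c_1(K_X)\cdot\alpha_t^{\,n-1}\ge 0$ for all $t$. Your approach is more elementary and makes the cohomological nature of the error term completely explicit; the paper's approach is perhaps more in the spirit of the surrounding arguments, where evolution equations along the flow are the main tool.
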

\begin{proof}
Recall that the scalar curvature and the volume form evolves as
\begin{align*}
	\dt R(\omt)
		&=\lapomt R(\omt ) + \absomt{\ric(\omt) + \omt}^2 -( R(\omt) + n) , \\
	\dt \omtn 
		&= -(R(\omt) + n) \omtn,
\end{align*}
(for instance \cite[(3.56)]{BEG13IntrotoKRF}). 
Then, we get 
\begin{align*}
&\int_X \absomt{\ric(\omt) + \omt}^2 \omt^n
	=\int_X 
			\left(
				\dt R(\omt)
			\right) \omtn
		+\int_X  ( R(\omt) + n) \omtn \\
	&=\left( 
	\ddt \int_X R(\omt) \omt^n
		+\int_XR(\omt )(R(\omt) + n) \omt^n
	\right)
		-\ddt \int_X \omt^n\\
	&= \ddt 
			\left( -n \left( 2\pi c_1( K_X ) \cdot \alpha_t^{n-1 } \right)  \right)
			+\int_XR(\omt )^2 \omt^n
			- n^2 (\ckx \cdot \alpha_t^{n-1})
			-\ddt \left( \alpha^n_t \right)\\
	&\le \ddt 
			\left( -n \left( 2\pi c_1( K_X ) \cdot \alpha_t^{n-1 } \right)  \right)
			+\int_XR(\omt )^2 \omt^n
			-\ddt \left( \alpha^n_t \right).
\end{align*}
We remark that since $K_X$ is nef and $\alpha_t$ is K\"ahler, $ (\ckx \cdot \alpha_t^{n-1}) \ge 0$ holds for any $t\ge 0$.
Therefore, by integrating with respect to $t$, we obtain the conclusion.
\end{proof}

The following new estimate gives the desired bound for the scalar curvature.
\begin{theorem}\label{Scalar}
Assume that the canonical bundle $K_X$ is semi-positive and not big. 
Let $\omz$ be a \kahler form on $X$ satisfying $[\omz] - 2\pi c_1(K_X) >0$ and $\omt $ be the \krf \ starting from $\omz$.
Let $\Omega $ be a smooth volume form on $X$ such that $-\ricOm \ge 0$.
We set $f_t$ and $\et $ by 
\begin{align*}
	f_t 
		&\deq \log \dfrac{\omtn}{\Ome}, \
	\et 
		\deq \intx \ii \partial \ft \wedge \dbar \ft \wedge \omt^{n-1}. 
\end{align*}
Then the following estimate holds: 
\begin{align*}
	\int_0^\infty  dt \intx \rt^2 \omtn
		&\le \dfrac{n}{2} \ez 
			+C,
\end{align*}
where $C>0$ is a constant depends only on $\omz$ and $c_1(K_X)$.
\end{theorem}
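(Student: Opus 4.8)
The plan is to compute $\ddt\et$ and show that, up to lower-order terms, it controls $-\intx\rt^2\omtn$, so that integrating in $t$ from $0$ to $\infty$ and using $\et\ge 0$ yields the bound. The first step is to identify $f_t$ along the flow. Since $\dt\omtn=-(\rt+n)\omtn$ and $\Omega$ is fixed, we have $\dt f_t=-(\rt+n)$; moreover, writing $\ric(\omt)=-\ii\partial\dbar\log\omtn$ and $\ricOm=-\ii\partial\dbar\log\Omega$, we get $\ric(\omt)+\omt=-\ii\partial\dbar f_t+(\omt+\ricOm)$. A convenient choice is to reorganize this using $-\ricOm\ge 0$: set $\chi\deq\omt+\ricOm$, which lies in the (semi-positive, by semi-positivity of $K_X$) class $\alpha_t+2\pi c_1(K_X)\cdot(\text{sign adjustments})$; the point is that $\chi$ is a closed $(1,1)$-form whose cohomology class converges and stays nonnegative. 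Then $\lapomt f_t=\tr_{\omt}(\ii\partial\dbar f_t)=n-\tr_{\omt}\chi-\rt$ after taking traces, so in particular $\rt=n-\tr_{\omt}\chi-\lapomt f_t$, and the scalar curvature is tied to the trace of $\chi$ plus a Laplacian term.

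Next I would differentiate $\et=\intx\ii\partial f_t\wedge\dbar f_t\wedge\omt^{n-1}$. There are three contributions: from $\dt f_t=-(\rt+n)$ in each of the two $\partial f_t$, $\dbar f_t$ factors, and from $\dt\omt^{n-1}=-(n-1)(\rict+\omt)\wedge\omt^{n-2}$ in the last factor. Integrating by parts the first two terms turns $\ii\partial(\dt f_t)\wedge\dbar f_t\wedge\omt^{n-1}$ into $-(\dt f_t)\,\ii\partial\dbar f_t\wedge\omt^{n-1}$ type expressions, i.e. into $(\rt+n)\lapomt f_t$ integrated against $\omtn$; using the identity from the previous paragraph this becomes $(\rt+n)(n-\tr_{\omt}\chi-\rt)$ integrated, whose dominant piece is $-\intx\rt^2\omtn$. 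The third term is where one must be careful: $\ii\partial f_t\wedge\dbar f_t\wedge(\rict+\omt)\wedge\omt^{n-2}$. Since $\rict+\omt=\ii\partial\dbar(-f_t)+\chi$ and $\chi\ge 0$, the $\chi$-part contributes something of a definite sign (it is $\int\ii\partial f_t\wedge\dbar f_t\wedge\chi\wedge\omt^{n-2}\ge 0$, hence with the right overall sign it helps rather than hurts), and the $\ii\partial\dbar(-f_t)$-part can be integrated by parts again to produce terms already of the form controlled above or genuinely lower order (a $\int|\nabla f_t|^2$-type quantity times bounded curvature-class data). Collecting everything, the claim to establish is an inequality of the shape
\begin{align*}
	\ddt\et &\le -\tfrac{2}{n}\intx\rt^2\omtn + (\text{terms integrable in }t),
\end{align*}
where the ``integrable'' terms are controlled by the convergence of the cohomology classes $\alpha_t\to 2\pi c_1(K_X)$ from Theorem \ref{Existence}, the nonnegativity $(2\pi c_1(K_X)\cdot\alpha_t^{n-1})\ge 0$, and the uniform bound on $\int_0^\infty dt\intx(\rt+n)\omtn=-\int_0^\infty\ddt(\alpha_t^n)\,dt<\infty$ coming from the fact that $\alpha_t^n$ is monotone and convergent (this uses $K_X$ not big, so the limiting volume $(2\pi c_1(K_X))^n$ could be $0$, but monotonicity still gives a finite total variation).

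The main obstacle I anticipate is the bookkeeping of the third term, i.e. making precise that $\int\ii\partial f_t\wedge\dbar f_t\wedge(\rict+\omt)\wedge\omt^{n-2}$ does not spoil the sign. Here the hypotheses are used essentially: semi-positivity of $K_X$ lets us pick $\Omega$ with $-\ricOm\ge0$ so that $\chi=\omt+\ricOm$ is a genuine semi-positive form (not merely cohomologically nef), which is what makes $\int\ii\partial f_t\wedge\dbar f_t\wedge\chi\wedge\omt^{n-2}\ge0$ a pointwise fact; and the $\ii\partial\dbar(-f_t)$ piece, after one more integration by parts, yields an expression in which the ``bad'' top-order part is again $\pm\int(\lapomt f_t)^2\sim\pm\int\rt^2$ with a sign we can absorb by choosing the constant in front correctly — this is the delicate algebraic point that determines whether the coefficient $n/2$ in the statement is attainable. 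Once that inequality is in hand, integrating from $0$ to $\infty$, discarding $\et\ge0$ at $t=\infty$ (which holds since $\et$ is manifestly nonnegative), and folding all the $t$-integrable error terms into the constant $C$ completes the proof.
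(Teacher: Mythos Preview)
Your overall strategy---differentiate $\et$, extract $-\tfrac{2}{n}\int R_t^2\,\omtn$, and integrate in $t$---matches the paper's. The decomposition of $\ddt\et$ into two contributions from $\dt f_t$ and one from $\dt\omt^{n-1}$ is also the paper's computation. But your handling of the two error terms contains a genuine error and omits two ingredients the paper actually uses.

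\textbf{The third term.} You set $\chi=\omt+\ricOm$ and assert that semi-positivity of $K_X$ (i.e.\ $-\ricOm\ge 0$) makes $\chi$ a ``genuine semi-positive form.'' This is false: $-\ricOm\ge 0$ means $\ricOm\le 0$, so $\chi=\omt-(-\ricOm)$ is $\omt$ \emph{minus} a nonnegative form and has no reason to be $\ge 0$ pointwise. Your sign argument for $\int\ii\partial f_t\wedge\dbar f_t\wedge\chi\wedge\omt^{n-2}$ therefore collapses. (Also, $[\chi]=\alpha_t-2\pi c_1(K_X)=e^{-t}([\omz]-2\pi c_1(K_X))$, a K\"ahler class by the hypothesis on $\omz$, not ``semi-positive by semi-positivity of $K_X$'' as you wrote.) The paper does not decompose via $\chi$ at all: it writes the third term as $(n-1)\int\ii\partial f_t\wedge\dbar f_t\wedge\omt^{n-2}\wedge(-\rict-\omt)$ and observes that $[-\rict-\omt]=-e^{-t}([\omz]-2\pi c_1(K_X))<0$, which is precisely where the hypothesis $[\omz]-2\pi c_1(K_X)>0$ enters. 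Your proposal never invokes this hypothesis. Your fallback plan of integrating the $\ii\partial\dbar(-f_t)$ piece by parts ``once more'' to get an absorbable $\pm\int(\lapomt f_t)^2$ is too vague to assess; the coefficient and sign you would obtain are exactly the issue, and you have not computed them.

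\textbf{The second term.} After the main computation one must control $-2\int R_t(-\ricOm)\wedge\omt^{n-1}$. You fold this into ``integrable in $t$'' via cohomological monotonicity, but that is not how it works: the integrand involves $R_t$, not just classes. The paper uses two specific facts you do not mention. First, the uniform lower bound $R_t\ge -C$ along the \krf\ (maximum principle), so that $R_t(-\ricOm)\wedge\omt^{n-1}\ge -C(-\ricOm)\wedge\omt^{n-1}$ pointwise. Second, integrating this gives $-C(2\pi c_1(K_X)\cdot\alpha_t^{n-1})$, and \emph{because $K_X$ is not big} the numerical dimension $\nu<n$ forces $(2\pi c_1(K_X)\cdot\alpha_t^{n-1})\le C'e^{-(n-\nu)t}$, which is what makes the error integrable on $[0,\infty)$. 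Your proposal uses neither the scalar-curvature lower bound nor the not-big hypothesis in any concrete way.
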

\begin{proof}
We first note that $f_t$ satisfies
\begin{align*}
		\dt \ft 
			&= \tromt{\dt \omt } = -(\rt + n),\ \
		\ddb \ft 
			= -\rict + \ricOme.
\end{align*}
Then, we get the following:
\begin{align}
\notag 
	\ddt \et
		&=-2\intx \fdt \ddb \ft \wedge \omt ^{n-1} 
		\\\notag &\quad 	
		-\intx \ft \ddb \ft \wedge
				(n-1) \omt^{n-2} \wedge \dt \omt \\
\notag 
		&=-2\intx  -(\rt + n) ( -\rict + \ricOme) \wedge \omt ^{n-1} 
		\\\notag &\quad 	
			+(n-1)\intx \ii \partial \ft \wedge \dbar \ft \wedge \omt ^{n-2}\wedge (-\rict -\omt )\\
\label{ABC}
		&= -\dfrac{2}{n}\intx \rt^2 \omtn 
			-2 \intx \rt (-\ricOme )\wedge \omt ^{n-1}
		\\\notag &\quad			
			+(n-1)\intx \ii \partial \ft \wedge \dbar \ft \wedge \omt ^{n-2}\wedge (-\rict -\omt ).	
\end{align}

The second term of (\ref{ABC}) is estimated as follows: Let $C>0$ be a constant satisfying 	$\rt \ge -C$ for $t \in [0,\infty)$ which always exists by a maximum principle argument and only depends on $\omz$ (see \cite[Theorem 3.2.2]{BEG13IntrotoKRF}).
Since the volume form $\Omega $ satisfies $-\ricOme\ge 0$, the second term is estimated as
	\begin{align*}
		\intx \rt (-\ricOme )\wedge \omt ^{n-1}
		&= \intx (\rt +C) (-\ricOme )\wedge \omt ^{n-1} - C \intx (-\ricOme )\wedge \omt ^{n-1} \\
		&= \intx (\rt +C) (-\ricOme )\wedge \omt ^{n-1} -C (\ckx \cdot \alpha_t^{n-1}) \\
		&\ge  -C (\ckx \cdot \alpha_t^{n-1})  \\
		&\ge -C^\prime  e^{-(n-\nu)t},
	\end{align*}
where $\nu$ is the numerical dimension of $K_X$, i.e. $\nu \deq \max \{ k=0, \dots, n \mid (c_1(K_X)^k \cdot [\omz]^{n-k} ) \neq 0 \}$.
Since $K_X$ is not big, we have $\nu < n$.

The third term of (\ref{ABC}) is less than or equal to zero since $[-\rict - \omt] = -\emt ( [\omz ] - \ckx) < 0$. 

Therefore,  we obtain
\begin{align*}
	\ddt \et
		\le -\dfrac{2}{n}\intx \rt^2 \omtn 
			+ C^\prime e^{-(n-\nu)t}.
\end{align*}
By integrating $t$ from $0$ to $\infty$, we get the conclusion since $\et \ge 0$.
\end{proof}

\begin{proof}[Proof of Theorem \ref{Main}]
We assume that $K_X$ is semi-positive and not big.
Let $\omt$ be the \krf \ as in Theorem \ref{Scalar}.
We now prove that $\omt$ satisfies (\ref{omi}), (\ref{Ricci}).
Proposition \ref{Existence} and the semi-positivity of $K_X$ imply (\ref{omi}).
By Proposition \ref{ReductionScalar} and Theorem \ref{Scalar}, we have 
\begin{align*}
	\int_0^\infty dt \int_X \absomt{\ric(\omt) + \omt}^2 \omt^n
		< \infty .
\end{align*}
Then, we can find a sequence $\{ t_i \}_{i=1}^\infty \subset \rr$ such that $t_i \to \infty$ as $i \to \infty$ and $\{ \omti \}_{i=1}^\infty $ satisfies (\ref{Ricci}).
Therefore, by Proposition \ref{Formula}, we obtain (\ref{MY}).
\end{proof}

\bibliographystyle{./00Setting/amsalphaurlmod}
\bibliography{./00Setting/reference}
\end{document}